\newcommand{\bburl}[1]{\textcolor{blue}{\url{#1}}}
\newcommand{\monthyear}[1]{%
  \def\@monthyear{\uppercase{#1}}}
\newcommand{\volnumber}[1]{%
  \def\@volnumber{\uppercase{#1}}}
\theoremstyle{plain}
\numberwithin{equation}{section} %change this to make globally numbered environments
\newtheorem{thm}{Theorem}[section]
\newtheorem{theorem}[thm]{Theorem}
\numberwithin{table}{section} %change this and the following line to make globally numbered tables and figures
\numberwithin{figure}{section}
\begin{document}

\monthyear{Month Year}
\volnumber{Volume, Number}
\setcounter{page}{1}

\title{Water Cells in Compositions of 1s and 2s}

\author{
\name{Brian Hopkins\textsuperscript{a}\thanks{CONTACT Brian Hopkins email: bhopkins@saintpeters.edu} and Aram Tangboonduangjit\textsuperscript{b}}
\affil{\textsuperscript{a}Department of Mathematics and Statistics,
                Saint Peter's University,
               Jersey City, New Jersey,
	07306, USA; \textsuperscript{b} Mahidol University International College,
               Mahidol University,
               Nakhon Pathom,
               73170, Thailand}
}

\maketitle

\begin{abstract}
Mansour and Shattuck introduced the notion of water cells for integer compositions in 2018.  We focus on compositions with parts restricted to 1 and 2 and consider the array of counts for such compositions of $n$ with $k$ water cells, establishing generating functions for the columns and diagonal sums, recurrences within the array in the spirit of Pascal's lemma, and connections to other restricted compositions.  Most of our proofs are combinatorial, but we also make use of Riordan arrays.\end{abstract}

\begin{keywords}
	Integer compositions; water cells; recurrence relations; combinatorial proofs; Riordan arrays
\end{keywords}

%%%%%%%%%%%%%%%%%%%%%%%%%%%%%%%%%%%%%%%%%%%%%%%%%%%%%%%%%%%%%%%%%%%%%%%%%%%%%%%%%%%%%%%%%%%%%%%%%%%%%%%%%%%%%%%%%%%%%%%%%%%%%%%%%%%%%%%%%%%%%%%%%%
%%%%%%%%%%%%%%%%%%%%%%%%%%%%%%%%%%%%%%%%%%%%%%%%%%%%%%%%%%%%%%%%%%%%%%%%%%%%%%%%%%%%%%%%%%%%%%%%%%%%%%%%%%%%%%%%%%%%%%%%%%%%%%%%%%%%%%%%%%%%%%%%%%
%%%%%%%%%%%%%%%%%%%%%%%%%%%%%%%%%%%%%%%%%%%%%%%%%%%%%%%%%%%%%%%%%%%%%%%%%%%%%%%%%%%%%%%%%%%%%%%%%%%%%%%%%%%%%%%%%%%%%%%%%%%%%%%%%%%%%%%%%%%%%%%%%%
\section{Introduction and background}
Given an integer $n \ge 1$, a composition of $n$ is an ordered collection $(c_1, \ldots, c_t)$ of positive integers such that $\sum_i c_i = n$.  We write $C(n)$ for the set of compositions of $n$ and $c(n) = |C(n)|$ for their count.  For example, 
\[ C(4) = \{(4), (3,1), (2,2), (2,1,1), (1,3), (1,2,1), (1,1,2), (1,1,1,1)\}\]
and $c(4) = 8$.  We sometimes use superscripts to denote repetition, e.g., writing $(1^4)$ rather than $(1,1,1,1)$.   
By convention, $c(0) = 1$ counting the empty composition.

MacMahon proved $c(n) = 2^{n-1}$ for $n \ge 1$ by a combinatorial argument \cite{m15} equivalent to the following.  Represent a composition of $n$ as a tiling of a $1 \times n$ board where a part $k$ corresponds to a $1 \times k$ rectangle.  The composition is determined by $n-1$ binary choices we call cut and join: A cut \texttt{C} at a juncture separates two parts while a join \texttt{J} is internal to a part with $k \ge 2$.  Figure \ref{fig0} shows this representation of the composition $(3,1)$ which has cut-join sequence \texttt{JJC}.  Further, MacMahon defined the conjugate of a composition obtained by switching the cuts and joins; Figure \ref{fig0} also shows the tiling representation of $(1,1,2)$ with cut-join sequence \texttt{CCJ}, the conjugate of $(3,1)$. 

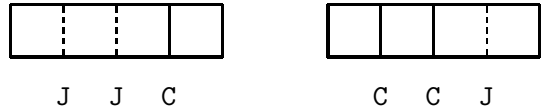
\begin{figure}[ht]
\begin{center}
\begin{picture}(200,40)
\setlength{\unitlength}{2pt}
\multiput(10,10.5)(0,2){5}{\line(0,1){1}}
\multiput(20,10.5)(0,2){5}{\line(0,1){1}}

\multiput(90,10.5)(0,2){5}{\line(0,1){1}}

\thicklines
\put(0,10){\line(1,0){40}}
\put(0,20){\line(1,0){40}}
\put(0,10){\line(0,1){10}}
\put(30,10){\line(0,1){10}}
\put(40,10){\line(0,1){10}}

\put(60,10){\line(1,0){40}}
\put(60,20){\line(1,0){40}}
\put(60,10){\line(0,1){10}}
\put(70,10){\line(0,1){10}}
\put(80,10){\line(0,1){10}}
\put(100,10){\line(0,1){10}}

\put(8.5,1){\texttt{J}}
\put(18.5,1){\texttt{J}}
\put(28.5,1){\texttt{C}}

\put(68.5,1){\texttt{C}}
\put(78.5,1){\texttt{C}}
\put(88.5,1){\texttt{J}}

\end{picture}
\caption{The tiling representations of compositions $(3,1)$ with cut-join sequence \texttt{JJC} and its conjugate $(1,1,2)$ with cut-join sequence \texttt{CCJ}.} \label{fig0}
\end{center}
\end{figure}

We will also reference partitions of $n$ which, in contrast to compositions, are unordered collections of parts.  For example,
\[P(4) = \{(4), (3,1), (2,2), (2,1,1), (1,1,1,1)\}\]
where we follow the convention of listing parts in nonincreasing order.
A generalized notion of partitions allows for parts to appear in multiple ``colors.''  These colored parts are also unordered.  For example, the partitions of 4 where there are two colors of 2 available, denoted by subscripts, are
\[\{(4), (3,1), (2_1, 2_1), (2_1, 2_2), (2_2, 2_2), (2_1, 1, 1), (2_2, 1, 1), (1,1,1,1)\};\]
notice that $(2_2, 2_1)$ is not listed as it is equivalent to the partition $(2_1, 2_2)$.

Let $C_{12}(n)$ denote the compositions of $n$ whose parts are all in the set $\{1,2\}$.  From above, we have $c_{12}(4) = |C_{12}(4)| = 5$.  It has been known since ancient India \cite{s85} that $c_{12}(n) = F_{n+1}$ where $F_n$ denotes the Fibonacci numbers defined as $F_0 = 0$, $F_1 = 1$, and $F_n = F_{n-1} + F_{n-2}$ for $n \ge 2$.

In 2018, Mansour and Shattuck introduced the notion of water cells of compositions based on a different graphical representation \cite{ms18}.  The bargraph representation of a composition $(c_1, \ldots, c_t)$ consists of $t$ columns starting from a horizontal line where column $i$ consists of $c_i$ square cells.  A water cell is a square outside of the bargraph representation that would ``hold water'' poured over the shape.  Figure \ref{fig1} shows the bargraph for the composition $(1,2,1,4,2,4,1,2,1,3) \in C(21)$ which has 8 water cells.  See also Blecher, Brennan, and Knopfmacher \cite{bbk18}, who seem to have come upon the same notion independently.  (They would say the composition has capacity 8.)

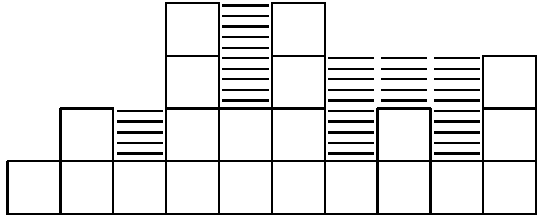
\begin{figure}[ht]
\begin{center}
\begin{picture}(210,80)
\setlength{\unitlength}{2pt}
\def\ccc{\put(0,0){\line(1,0){10}}\put(10,0){\line(0,1){10}}
\put(10,10){\line(-1,0){10}}\put(0,10){\line(0,-1){10}}}
\def\bbb{\multiput(0.75,1.5)(0,2){5}{\line(1,0){8.5}}}
\put(0,0){
\multiput(0,0)(0,10){1}{\ccc}
\multiput(10,0)(0,10){2}{\ccc}
\multiput(20,0)(0,10){1}{\ccc}
\multiput(30,0)(0,10){4}{\ccc}
\multiput(40,0)(0,10){2}{\ccc}
\multiput(50,0)(0,10){4}{\ccc}
\multiput(60,0)(0,10){1}{\ccc}
\multiput(70,0)(0,10){2}{\ccc}
\multiput(80,0)(0,10){1}{\ccc}
\multiput(90,0)(0,10){3}{\ccc}
\multiput(20,10)(0,10){1}{\bbb}
\multiput(40,20)(0,10){2}{\bbb}
\multiput(60,10)(0,10){2}{\bbb}
\multiput(70,20)(0,10){1}{\bbb}
\multiput(80,10)(0,10){2}{\bbb}
}
\end{picture}
\caption{The bargraph of $(1,2,1,4,2,4,1,2,1,3)$ and its water cells.} \label{fig1}
\end{center}
\end{figure}

In this work, we restrict our attention to the compositions $C_{12}(n)$.  Let $W(n,k)$ be the compositions in $C_{12}(n)$ with exactly $k$ water cells and $w(n,k) = |W(n,k)|$.  (Because we are only considering compositions using parts 1 and 2, the more precise notation $W_{12}(n,k)$ is unnecessary here.)  Figure \ref{fig2} provides small examples of these compositions:  $(2,1,2) \in C_{12}(5)$ is the unique smallest composition with one water cell while $(2,1,1,2) \in C_{12}(6)$ and $(2,1,2,1,2) \in C_{12}(8)$ are among the compositions with two water cells.

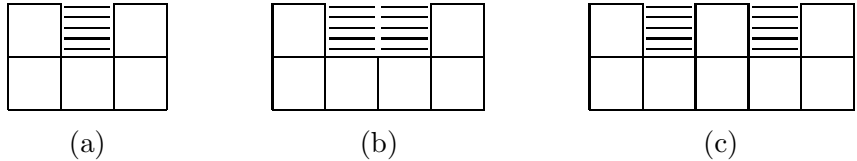
\begin{figure}[ht]
\begin{center}
\begin{picture}(320,70)
\setlength{\unitlength}{2pt}
\def\ccc{\put(0,0){\line(1,0){10}}\put(10,0){\line(0,1){10}}
\put(10,10){\line(-1,0){10}}\put(0,10){\line(0,-1){10}}}
\def\bbb{\multiput(0.75,1.5)(0,2){5}{\line(1,0){8.5}}}
\put(0,10){
\multiput(0,0)(0,10){2}{\ccc}
\multiput(10,0)(0,10){1}{\ccc}
\multiput(20,0)(0,10){2}{\ccc}
\multiput(50,0)(0,10){2}{\ccc}
\multiput(60,0)(0,10){1}{\ccc}
\multiput(70,0)(0,10){1}{\ccc}
\multiput(80,0)(0,10){2}{\ccc}
\multiput(110,0)(0,10){2}{\ccc}
\multiput(120,0)(0,10){1}{\ccc}
\multiput(130,0)(0,10){2}{\ccc}
\multiput(140,0)(0,10){1}{\ccc}
\multiput(150,0)(0,10){2}{\ccc}
\multiput(10,10)(0,10){1}{\bbb}
\multiput(60,10)(0,10){1}{\bbb}
\multiput(70,10)(0,10){1}{\bbb}
\multiput(120,10)(0,10){1}{\bbb}
\multiput(140,10)(0,10){1}{\bbb}
\put(11.5,-8){(a)}
\put(66.5,-8){(b)}
\put(131.5,-8){(c)}
}
\end{picture}
\caption{The bargraphs of (a) $(2,1,2)$ with one water cell, (b) $(2,1,1,2)$ and (c) $(2,1,2,1,2)$ each with two water cells.} \label{fig2}
\end{center}
\end{figure}

Narrowing a very general result of Mansour and Shattuck \cite[Theorem 2]{ms18} to our context gives the generating function
\begin{equation}
\sum w(n,k) q^n z^k = \frac{1}{1-q} + \frac{q^2(1-zq)}{(1-q)^2(1-zq-q^2)} \label{wcgf}
\end{equation}
(see also Blecher, Brennan, and Knopfmacher \cite[p.\ 4]{bbk18}).  Note that with $z=1$ this reduces to $1/(1-q-q^2)$, a generating function for $F_{n+1}$.

In Section 2, we consider the array of numbers $w(n,k)$ and establish recurrences in the resulting triangle of numbers and also for its diagonal sums.  We also give bijections to other classes of restricted compositions as well as certain partitions with colored parts.  

Although most of our arguments are combinatorial, in Section 3 we explain connections to Riordan arrays and use them to complete one proof.  These structures, defined in 1991 by Shapiro and collaborators \cite{sgww91}, succinctly describe certain number triangles and allow for simple determination of row sums, alternating row sums, diagonal sums, etc.  Those unfamiliar with the Riordan group (there is a group structure for these arrays) are invited to read a recent survey article \cite{dfs24} by Shapiro et al.

%%%%%%%%%%%%%%%%%%%%%%%%%%%%%%%%%%%%%%%%%%%%%%%%%%%%%%%%%%%%%%%%%%%%%%%%%%%%%%%%%%%%%%%%%%%%%%%%%%%%%%%%%%%%%%%%%%%%%%%%%%%%%%%%%%%%%%%%%%%%%%%%%%
%%%%%%%%%%%%%%%%%%%%%%%%%%%%%%%%%%%%%%%%%%%%%%%%%%%%%%%%%%%%%%%%%%%%%%%%%%%%%%%%%%%%%%%%%%%%%%%%%%%%%%%%%%%%%%%%%%%%%%%%%%%%%%%%%%%%%%%%%%%%%%%%%%
%%%%%%%%%%%%%%%%%%%%%%%%%%%%%%%%%%%%%%%%%%%%%%%%%%%%%%%%%%%%%%%%%%%%%%%%%%%%%%%%%%%%%%%%%%%%%%%%%%%%%%%%%%%%%%%%%%%%%%%%%%%%%%%%%%%%%%%%%%%%%%%%%%
\section{Water cells in $C_{12}(n)$}

Table \ref{wtab} provides the $w(n,k)$ values for small $n$ and $k$, the number of compositions in $C_{12}(n)$ with exactly $k$ water cells.  As mentioned for Figure \ref{fig2}, the smallest composition with one water cell is $(2,1,2) \in C_{12}(5)$, thus the $k = 1$ column has its first nonzero value in row $n = 5$.  Because we are partitioning the compositions of $C_{12}(n)$ by the number of water cells, the row sums are Fibonacci numbers.

\begin{table}[ht]
\caption{The $w(n,k)$ values for $0 \le n \le 14$ and $0 \le k \le 10$.} 
\medskip
\begin{center}
\renewcommand{\arraystretch}{1.3}
\begin{tabular}{r|rrrrrrrrrrr}
$n \backslash k$ & 0 & 1 & 2 & 3 & 4 & 5 & 6 & 7 & 8 & 9 & 10 \\ \hline
0 & 1 \\
1 & 1 \\
2 & 2 \\
3 & 3 \\
4 & 5 \\
5 & 7   &   1 \\
6 & 10  &  2    & 1 \\
7 & 13  &  5    & 2    & 1 \\
8 & 17  &   8   &  6   & 2   & 1 \\
9 & 21  & 14   & 10  & 7   & 2   & 1 \\
10 & 26  & 20   & 20  & 12 & 8   & 2 &  1 \\
11 & 31  & 30   & 30  & 27 & 14  & 9 &  2 &  1  \\
12 & 37  & 40   & 50  & 42 &  35 & 16 & 10 & 2  & 1 & \phantom{10} \\
13 & 43  & 55   & 70  & 77 &  56 & 44 & 18 & 11 & 2 &   1 \\
14 & 50  & 70   & 105 & 112 & 112 &  72 & 54 & 20 & 12 & 2 & 1
\end{tabular}
\end{center}
\label{wtab}
\end{table}

Below, we establish a generating function for each column of the $w(n,k)$ irregular triangular array.  (The first six columns are A033638, A006918, A096338, A177747, A299337, and A178440 in \cite{o}, respectively.)  Next, we show several additional relations and a direct formula for the first column, then show how other columns have more efficient recurrences when values from other columns are incorporated.  In the last result of this section, we show that the diagonal sums of $w(n,k)$ count compositions where only the first and last parts are allowed to be odd.

\subsection{Water cell columns}
We begin with the first column which, as suggested by the irregular shape of the number array in Table \ref{wtab}, requires individual attention.  Note that a composition without a water cell is weakly unimodal, i.e., $c_1 \le \cdots \le c_k \ge \cdots \ge c_t$ for some $1 \le k \le t$.

\begin{theorem} \label{wum}
The generating function for $w(n,0)$ is
\begin{equation}
 \sum_{n \ge 0} w(n,0) q^n = \frac{1}{1-q} + \frac{q^2}{(1-q)^2(1-q^2)} = \frac{1-q+q^3}{1 - 2 q + 2 q^3 - q^4}. \label{w0gf}
 \end{equation}

Also, for each $n \ge 0$, the following values are equal.
\begin{enumerate}
\renewcommand{\theenumi}{\alph{enumi}}
\item $w(n,0)$,
\item $\lfloor n^2/4 \rfloor + 1$,
\item $w(n-2,0) + n - 1$ (for $n \ge 2$),
\item $2w(n-1,0) - 2w(n-3,0) + w(n-4,0)$ (for $n \ge 4$).
\end{enumerate}
\end{theorem}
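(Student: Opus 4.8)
The plan is to start from the combinatorial description of the water-cell-free compositions and let the generating function, the closed form, and the two recurrences all fall out of it. As noted just before the statement, a composition with no water cell is weakly unimodal, and conversely a weakly unimodal bargraph has no interior valley in which to trap water; so $w(n,0)$ is the number of weakly unimodal members of $C_{12}(n)$. Since only the parts $1$ and $2$ occur, these are precisely the compositions $(1^a,2^b,1^c)$ with $a,c\ge 0$ and $b\ge 1$, together with the single all-$1$s composition (the empty composition when $n=0$). The one thing to watch throughout is the degenerate case $b=0$, where the leading/trailing split is meaningless and the all-$1$s composition must be counted on its own.

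For the generating function in \eqref{w0gf}, fix $b\ge 1$: the position of the block of $2$s is determined by $a$, and $a+c=n-2b$ has $n-2b+1$ nonnegative solutions, so $w(n,0)=1+\sum_{b=1}^{\lfloor n/2\rfloor}(n-2b+1)$, the leading $1$ counting the all-$1$s composition. Multiplying by $q^n$ and summing, the $1$ contributes $1/(1-q)$ and, after interchanging the order of summation, the double sum contributes $\sum_{b\ge 1}q^{2b}\sum_{m\ge 0}(m+1)q^m=q^2/\big((1-q)^2(1-q^2)\big)$; this gives the first expression in \eqref{w0gf}, and clearing denominators over $(1-q)^3(1+q)$ together with $(1-q)^2(1+q)+q^2=1-q+q^3$ gives the second. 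Evaluating the inner sum instead gives the closed form $w(n,0)=1+\lfloor n/2\rfloor\lceil n/2\rceil=\lfloor n^2/4\rfloor+1$, which is (b); equivalently one recognizes $q^2/\big((1-q)^2(1-q^2)\big)$ as the generating function of the quarter-squares.

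For (c) I would give a bijection: a weakly unimodal $(1^a,2^b,1^c)\in C_{12}(n)$ with $b\ge 2$ maps to $(1^a,2^{b-1},1^c)\in C_{12}(n-2)$, and this is a bijection onto those weakly unimodal members of $C_{12}(n-2)$ having at least one $2$, i.e., onto all but the all-$1$s one, so $w(n-2,0)-1$ of them; the compositions of $n$ not in the domain are the $n-1$ with a single $2$ together with the all-$1$s composition, a total of $n$, giving $w(n,0)=\big(w(n-2,0)-1\big)+n=w(n-2,0)+n-1$. (Alternatively (c) is immediate from (b) since $n^2-(n-2)^2=4(n-1)$.) Finally (d) follows by iterating (c): it gives $w(n-1,0)-w(n-3,0)=n-2$ and $w(n,0)-w(n-4,0)=2n-4$, and eliminating $n$ between these yields $w(n,0)=2w(n-1,0)-2w(n-3,0)+w(n-4,0)$ for $n\ge 4$; one may also read this recurrence off the denominator $1-2q+2q^3-q^4$ in \eqref{w0gf}.

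None of this presents a real obstacle; the only part needing genuine care is the repeated bookkeeping of the degenerate all-$1$s composition (and, when $n$ is odd, the asymmetry between $\lfloor n/2\rfloor$ and $\lceil n/2\rceil$ in the closed form), which is exactly where an otherwise clean argument is most likely to slip.
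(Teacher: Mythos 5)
Your proposal is correct, and for the generating function, part (b), and part (c) it is essentially the paper's argument in different clothing: the paper encodes the weakly unimodal compositions $(1^a,2^b,1^c)$ as partitions into parts $2$ and two colors of parts $1$ to read off $q^2/\bigl((1-q)^2(1-q^2)\bigr)$, whereas you sum $n-2b+1$ over $b\ge 1$ directly; the paper's proof of (b) is exactly your count organized by the number of $2$s, and its proof of (c) is exactly your ``delete one $2$ from a block of length $\ge 2$'' bijection, with the same $(w(n-2,0)-1)+(n-1)+1$ bookkeeping. The real divergence is in (d). You obtain it by iterating (c): $w(n,0)-w(n-4,0)=2n-4=2\bigl(w(n-1,0)-w(n-3,0)\bigr)$, which is a perfectly valid two-line derivation (the needed instances of (c) all hold for $n\ge 4$). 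The paper instead insists on a direct combinatorial proof of (d), constructing an explicit bijection $W(n,0)\cup 2W(n-3,0)\cong 2W(n-1,0)\cup W(n-4,0)$ via a decomposition of $W(n,0)$ into ``increasable'' and ``non-increasable'' compositions. Your route is shorter and less error-prone; the paper's buys a self-contained bijection (and, as a byproduct, the identity $\lvert W^i(n,0)\rvert=\lceil n/2\rceil$ noted after the theorem), at the cost of considerably more case analysis. Both are complete proofs of the stated theorem.
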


\begin{proof}
For the generating function, rather than work from \eqref{wcgf}, we give a combinatorial argument.  A composition in $C_{12}(n)$ with no water cells is weakly unimodal, so it has the form $(1^a, 2^b, 1^c)$ for nonnegative $a,b,c$.  One possibility is the composition $(1^n) \in C_{12}(n)$ accounted for in the generating function by the summand $1/(1-q)$.  The second summand, $q^2/((1-q)^2(1-q^2))$, counts partitions of $n$ with parts 1 and 2 where there is at least one part 2 and there are two colors of parts 1.  These correspond to compositions in $C_{12}(n)$ with no water cells and at least one part 2: any partition parts $1_1$ correspond to the initial run $1^a$ in the composition, additional parts 2 from the partition contribute to $2^b$, and any partition parts $1_2$ correspond to the terminal run $1^c$.  (For example, the partition $(2,2,1_1,1_1,1_2)$ corresponds to the composition $(1,1,2,2,1)$.)  The second generating function expression follows from algebraic manipulation.

For the direct formula (b), consider the number of parts 2, which must be adjacent.  Suppose first that $n = 2m$.  There is one composition counted by $w(n,0)$ that has no parts 2, the composition $(1^n)$.  There are $2m-1$ compositions with a single part 2, as it can be before or after any of the $2m-2$ parts 1.  There are $n-3$ compositions with the block $(2,2)$ before or after any of the $n-4$ parts 1.  This continues to the single composition $(2^m)$, giving
\[w(2m,0) = 1 + (2m-1) + (2m-3) + \cdots + 1 = m^2 + 1.\]
The analogous sum for the case $n = 2m+1$ is
\[w(2m+1,0) = 1 + 2m + (2m-2) + \cdots + 2 = m^2 + m + 1\]
and, in both cases, these match $\lfloor n^2/4 \rfloor + 1$.

For the nonhomogeneous recurrence (c), we describe how to construct $W(n,0)$ from most of $W(n-2,0)$ and some other compositions.  We actually show 
\[w(n,0) = (w(n-2,0) - 1) + (n-1) + 1.\]
For each of the $w(n-2,0) - 1$ compositions of $W(n-2,0)$ that contain at least one part 2 in a single run, add another part 2 to that run.  There are $n-1$ compositions in $W(n,0)$ with a single part 2 in every possible positions before and after the $n-2$ parts 1.  Finally, $(1^n) \in W(n,0)$.  In the reverse direction, from the compositions in $W(n,0)$ with two or more parts 2, remove one of them to make a composition in $W(n-2,0)$; this leaves $n-1$ compositions in $W(n,0)$ with a single part 2 and also $(1^n)$.

The linear recurrence (d) follows from the generating function \eqref{w0gf}, but we give a combinatorial proof, establishing the bijection
\[W(n,0) \cup 2W(n-3,0) \cong 2W(n-1,0) \cup W(n-4,0)\]
where the coefficient 2 means two copies of a set.  The explanation of the bijection is simplified by defining the increasable subset $W^i(n,0)$ of $W(n,0)$ as $(1^n)$ and the compositions whose last two parts are $(2,1)$, i.e., the compositions where the last part can be increased by one without creating a water cell.  Let $W^j(n,0) = W(n,0) \setminus W^i(n,0)$.  We show 
\begin{gather*}
W(n,0) \cong W(n-1,0) \cup W^i(n-1,0), \\
2W(n-3,0) \cong\ W^j(n-1,0) \cup W(n-4,0)
\end{gather*}
from which the claim follows.

For the first bijection, decrease the last part of each composition in $W(n,0)$ by one.  Those with last part 1 are mapped bijectively to $W(n-1,0)$.  The compositions in $W(n,0)$ with last part 2 are mapped into another $W(n-1,0)$ set, namely the compositions ending in $(2,1)$ and $(1^{n-1})$, i.e., $W^i(n-1,0)$.

For the reverse map of the first bijection, add a part 1 at the end of the compositions in $W(n-1,0)$ to obtain the compositions of $W(n,0)$ with last part 1.  Increase the last part of the compositions in $W^i(n-1,0)$ by one (allowed by definition), giving the compositions of $W(n,0)$ with last part 2.

For the second bijection, in the first set $W(n-3,0)$, for the compositions that end in 1, removing that last part establishes a bijection to $W(n-4,0)$.  For the compositions in the first set $W(n-3,0)$ that end in 2, add another part 2.  In the second set $W(n-3,0)$, send $(1^{n-3})$ to $(1^{n-3},2)$ and add the parts $(1,1)$ at the end of the other compositions.  Together these comprise the set $W^j(n-1,0)$: those ending with a run of two or more parts 2, the composition ending in a single 2, and those with at least one part 2 ending in $(1,1)$, exactly the compositions where increasing the last part yields a composition outside $C_{12}(n-1)$ or a composition in $C_{12}(n-1)$ with a water cell.

For the reverse map of the second bijection, for the compositions of $W^j(n-1,0)$ that end in $(1,1)$, removing those last two parts establishes a bijection to $W(n-3,0)$ except for $(1^{n-3})$ (since $(1^{n-1}) \in W^i(n-1,0)$).  Since no composition of $W^j(n-1,0)$ ends in $(2,1)$ (that would be increasable), the remaining compositions have last part 2, which we remove.  One of those is $(1^{n-3},2) \in W^j(n-1,0)$ whose image $(1^{n-3})$ completes the first $W(n-3,0)$.  The other compositions in $W^j(n-1,0)$ that end in 2 have additional parts 2: These must be in a run of parts 2 at the end, else there would be a water cell.  Thus the images in the second $W(n-3,0)$ comprise all the compositions in that $W(n-3,0)$ that end in 2.  Finally, add a part 1 at the end of each composition in $W(n-4,0)$ to complete the second $W(n-3,0)$ with the compositions that end in 1.
\end{proof}

See Table \ref{wumd} for an example of the bijections in the last part of the proof.

\begin{table}[ht]
\caption{The bijections of the proof of Theorem \ref{wum} (d) for $n= 6$.} 
\medskip
\centering
\begin{tabular}{rcl}
$W(6,0)$ & $\longleftrightarrow$ & $W(5,0) \cup W^i(5,0)$ \\ \hline
$(2,2,1,1)$ & & $(2,2,1)$ \\
$(2,1,1,1,1)$ & & $(2,1,1,1)$ \\
$(1,2,2,1)$ & & $(1,2,2)$ \\
$(1,2,1,1,1)$ & & $(1,2,1,1)$ \\ 
$(1,1,2,1,1)$ & & $(1,1,2,1)$ \\
$(1,1,1,2,1)$ & & $(1,1,1,2)$ \\
$(1,1,1,1,1,1)$ & & $(1,1,1,1,1)$ \\ \cline{3-3}
$(2,2,2)$ & & $(2,2,1)$  \\
$(1,1,2,2)$ & & $(1,1,2,1)$ \\
$(1,1,1,1,2)$ & & $(1,1,1,1,1)$ 
\end{tabular}
\begin{tabular}{rcl}
$2W(3,0)$ & $\longleftrightarrow$ & $W(2,0) \cup W^j(5,0)$ \\ \hline
$(2,1)$ & & $(2)$ \\
$(1,1,1)$ & & $(1,1)$ \\ \cline{3-3}
$(1,2)$ & & $(1,2,2)$ \\ \cline{1-1}
$(1,1,1)$ & & $(1,1,1,2)$ \\
$(2,1)$ & & $(2,1,1,1)$ \\ 
$(1,2)$ & & $(1,2,1,1)$ 
\end{tabular}
\label{wumd}
\end{table}

%new
It follows from the proof of Theorem \ref{wum} (d) that the number of increasable compositions in $W(n,0)$ satisfies $\vert W^i(n,0) \vert = \lceil n/2 \rceil$ (the integer ceiling function).  The complementary sequence $\vert W^j(n,0) \vert$ matches A004652 in \cite{o}.

Next we determine the generating function for each sequence $w(n,k)$ with $k \ge 1$ by extending the connection between the appropriate elements of $C_{12}(n)$ and partitions with parts restricted to 1 and 2 which may appear in multiple colors.

\begin{theorem} \label{wcolgf}
For a given $k \ge 1$, the generating function for the sequence $w(n,k)$ is
\[ \sum w(n,k) q^n = \frac{q^{k+4}}{(1-q)^2(1-q^2)^{k+1}}.\]
\end{theorem}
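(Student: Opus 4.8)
The plan is to give a combinatorial proof that refines the bargraph structure of the compositions counted by $w(n,k)$, exactly as Theorem~\ref{wum} did for $k=0$, and to deduce the generating function from a product decomposition; I will also note the quick alternative of extracting the coefficient of $z^k$ from \eqref{wcgf}.

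First I would pin down the shape of a composition in $C_{12}(n)$ with exactly $k\ge 1$ water cells. Since every column of the bargraph has height $1$ or $2$, a water cell can only occupy the second row, sitting above a height-$1$ column that has a height-$2$ column somewhere to its left and somewhere to its right. Hence the water cells are precisely the parts equal to $1$ lying strictly between the first part equal to $2$ and the last part equal to $2$. In particular a composition with $k\ge 1$ water cells has at least two parts equal to $2$, and it decomposes uniquely as
\[(1^a,\,2,\,\mu,\,2,\,1^c),\]
where $a,c\ge 0$ and $\mu=(m_1,\dots,m_\ell)$ is a (possibly empty) word on $\{1,2\}$ containing exactly $k$ ones and some number $j\ge 0$ of twos; conversely every such tuple yields a composition in $W(n,k)$ of size $n=a+c+k+2j+4$.

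Next I would convert this data to partitions with colored parts, extending the dictionary of Theorem~\ref{wum}. Map the initial run $1^a$ to $a$ copies of a part $1$ in a first color and the terminal run $1^c$ to $c$ copies of a part $1$ in a second color; regard the two framing parts $2$ together with the $k$ interior ones as mandatory, contributing a factor $q^{k+4}$; and record each of the $j$ interior twos by which of the $k+1$ gaps cut out by the $k$ interior ones it lies in, turning it into a part $2$ in one of $k+1$ colors. The one thing to check is that this is a genuine bijection: for fixed $k$ and $j$, the number of words $\mu$ with $k$ ones and $j$ twos is $\binom{k+j}{k}$, which is exactly the number of size-$j$ multisets drawn from $k+1$ colors, so the interior of the composition and the colored parts $2$ of the partition are in bijection. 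Thus $W(n,k)$ is in bijection with the partitions of $n-k-4$ having part $1$ in two colors and part $2$ in $k+1$ colors.

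Finally I would read off the generating function: multisets of a part $1$ in two colors contribute $1/(1-q)^2$, multisets of a part $2$ in $k+1$ colors contribute $1/(1-q^2)^{k+1}$, and restoring the mandatory size multiplies by $q^{k+4}$, giving $\sum w(n,k)q^n = q^{k+4}/\big((1-q)^2(1-q^2)^{k+1}\big)$. As a check (or an alternative proof) one may instead extract the coefficient of $z^k$ directly from \eqref{wcgf}: expanding $1/(1-zq-q^2)=\sum_{j\ge 0} z^j q^j/(1-q^2)^{j+1}$ and multiplying by $q^2(1-zq)/(1-q)^2$, the coefficient of $z^k$ for $k\ge 1$ telescopes to the same expression. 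I expect the only delicate point to be the structural step: verifying that the parts $1$ strictly between the first and last $2$ really are the water cells and that the displayed decomposition is unique, together with the small identity $\binom{k+j}{k}=\binom{k+j}{j}$ that makes the colored-part correspondence an honest bijection rather than a mere count.
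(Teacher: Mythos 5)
Your proof is correct and takes essentially the same approach as the paper: both identify $W(n,k)$ with the partitions of $n-k-4$ into parts $1$ in two colors and parts $2$ in $k+1$ colors, via the mandatory frame $(2,1^k,2)$ and the $k+1$ gaps determined by the $k$ interior ones. The alternative you sketch of extracting the coefficient of $z^k$ from \eqref{wcgf} is not pursued in the paper, which likewise opts for the combinatorial route.
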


The proof builds on the colored partition argument in the verification of \eqref{w0gf}. 

\begin{proof}
Again, we give a combinatorial proof.

As suggested in Figure \ref{fig2} (a) and (b), the smallest composition with $k$ water cells is $(2,1^k, 2) \in C_{12}(k+4)$.  We describe how to construct, for all $n$, all compositions in $C_{12}(n)$ with exactly $k$ water cells from $(2,1^k, 2)$.

The $q^{k+4}$ term in the numerator of the generating function corresponds to $(2,1^k, 2)$. The denominator of the generating function accounts for partitions made of two colors of parts 1 and $k+1$ colors of parts 2.  The parts of such a partition of $n-k-4$ are incorporated into the composition $(2,1^k, 2)$ to make a composition in $C_{12}(n)$ as follows.  

Any partition parts $1_1$ correspond to an initial run of parts 1 in the composition, and any partition parts $1_2$ correspond to a terminal run of parts 1.  Note that because these new composition parts 1 are not bounded by parts 2 on both sides, they do not contribute any water cells.  

Any partition parts $2_1$ correspond to additional parts 2 in the composition placed after the first part 2 in $(2,1^k, 2)$, and any partition parts $2_{k+1}$ correspond to additional parts 2 in the composition placed after the last part 2 in $(2,1^k, 2)$.  For each $2 \le i \le k$, any partition parts $2_i$ correspond to parts 2 in the composition placed between the $(i-1)$st part 1 and the $i$th part 1 in $(2,1^k, 2)$.  None of these parts 2 impact the number of water cells, so the resulting composition in $C_{12}(n)$ has exactly $k$ water cells.
\end{proof}

Note that the statement about the smallest composition with $k$ water cells in the proof is true only for our setting of $C_{12}(n)$.  Among all compositions, the smallest with six water cells is $(3,1,1,1,3) \in C(9)$, not $(2,1^6,2) \in C(10)$.

It follows from Theorem \ref{wcolgf} that, for a fixed $k \ge 1$, the sequence $w(n,k)$ satisfies a degree $2k+4$ linear recurrence.  We show, however, that using values from other columns allows for much simpler recurrences.  In the next result, for example, we establish that $w(n,1)$ depends on just the two terms $w(n-2,1)$ and $w(n-3,0)$ rather than involving $w(n-6,1)$.

\begin{theorem} \label{wc1}
For $n \ge 5$, \[w(n,1) = w(n-2,1) + w(n-3,0) - 1.\]
\end{theorem}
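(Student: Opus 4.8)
The plan is to give a combinatorial proof by partitioning $W(n,1)$ into two pieces, one in bijection with $W(n-2,1)$ and one in bijection with $W(n-3,0)$ with a single composition removed. The first step is to pin down the shape of the members of $W(n,1)$. Since every part is $1$ or $2$, the water level above any column is at most $1$, and a column holds a (single) water cell exactly when it is a part $1$ lying strictly between the leftmost and the rightmost parts equal to $2$. Hence a composition in $C_{12}(n)$ has exactly one water cell precisely when it has at least two parts equal to $2$ and exactly one part $1$ strictly between the first and last such part, all other parts between those extremes being $2$. This forces the shape $(1^a, 2^s, 1, 2^t, 1^c)$ with $a, c \ge 0$ and $s, t \ge 1$, where $n = a + c + 2s + 2t + 1$.

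Next I would split $W(n,1)$ according to whether the initial run of $2$s has length $s \ge 2$ or length $s = 1$. For the first piece, deleting one $2$ from that initial run sends $(1^a, 2^s, 1, 2^t, 1^c)$ to $(1^a, 2^{s-1}, 1, 2^t, 1^c)$, which lies in $W(n-2,1)$ because $s-1 \ge 1$ and $t \ge 1$; this is a bijection onto all of $W(n-2,1)$, the inverse being insertion of a $2$ into the initial run (which always yields a run of length at least $2$). For the second piece, where the composition has the form $(1^a, 2, 1, 2^t, 1^c)$ with $t \ge 1$, deleting the leading $2$ together with the trapped $1$ produces $(1^a, 2^t, 1^c)$, a weakly unimodal composition of $n-3$ with at least one part $2$, hence a member of $W(n-3,0) \setminus \{(1^{n-3})\}$ (using the description of $W(n,0)$ from the proof of Theorem~\ref{wum}); conversely every $(1^x, 2^y, 1^z) \in W(n-3,0)$ with $y \ge 1$ arises uniquely this way, as the image of $(1^x, 2, 1, 2^y, 1^z)$. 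Combining the two pieces yields $w(n,1) = w(n-2,1) + (w(n-3,0) - 1)$. The hypothesis $n \ge 5$ is exactly what makes $W(n,1)$ nonempty; for $n = 5$ the first piece is empty and $W(5,1) = \{(2,1,2)\}$ maps to $\{(2)\} = W(2,0) \setminus \{(1^2)\}$, matching $1 = 0 + 2 - 1$.

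The routine parts are checking that each of the two maps is a genuine bijection with the stated inverse, and that the split into $s \ge 2$ versus $s = 1$ loses nothing and double-counts nothing. The only step requiring real care is the first one: justifying that, when parts are restricted to $\{1,2\}$, the water cells are precisely the $1$s strictly between the extreme $2$s, and deducing from ``exactly one water cell'' the rigid form $(1^a, 2^s, 1, 2^t, 1^c)$ — after that the argument is bookkeeping. One could package the whole thing as a single injection $W(n,1) \hookrightarrow W(n-2,1) \sqcup W(n-3,0)$ whose image omits only $(1^{n-3})$, but I expect the two-piece formulation to read more transparently, and a small table of the $n = 7$ case would make the bijections concrete.
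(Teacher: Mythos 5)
Your proof is correct and is essentially the paper's argument reflected left-to-right: the paper cases on whether the run of 2s \emph{after} the trapped 1 has length one or more (removing a 2 from that run, or removing the 1 and the following 2), while you case on the run \emph{before} the trapped 1 and perform the mirror-image deletions; your explicit normal form $(1^a,2^s,1,2^t,1^c)$ just makes precise the structure the paper uses implicitly. No gaps.
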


\begin{proof}
We establish a bijection
\[W(n,1) \cong W(n-2,1) \cup \{W(n-3,0) \setminus (1^{n-3})\}\]
from which the claim follows.

Given a composition in $W(n,1)$, it has either a single part 2 after its water cell or a run of at least two parts 2 after its water cell.  If there is a single part 2 after the water cell, then remove the part 1 beneath the water cell and the next part 2, leaving a composition in $W(n-3,0)$ with at least one part 2 (preceding the removed water cell).  If the composition in $W(n,1)$ has a run of two or more parts 2 after the water cell, remove the first of those parts 2, leaving a composition in $W(n-2,1)$.    

For the reverse map, a composition in $W(n-2,1)$ must contain consecutive parts $(2,1,2)$ to have a water cell; add another part 2 after the water cell to make a composition in $W(n,1)$.  Given a composition in $W(n-3,0)$ with at least one part 2, add the parts $(1,2)$ after the last part 2 to make a composition in $W(n,1)$.
\end{proof}

See Table \ref{wc1ex} for an example of the bijection.

\begin{table}[ht]
\caption{The bijections of the proof of Theorem \ref{wc1} for $n= 8$.} 
\medskip
\centering
\begin{tabular}{rcl}
$W(8,1)$ & $\longleftrightarrow$ & $\{W(5,0) \setminus (1^5)\} \cup W(6,1)$ \\ \hline
$(2,2,1,2,1)$ & & $(2,2,1)$ \\
$(2,1,2,1,1,1)$ & & $(2,1,1,1)$ \\ 
$(1,2,2,1,2)$ & & $(1,2,2)$ \\
$(1,2,1,2,1,1)$ & & $(1,2,1,1)$ \\
$(1,1,2,1,2,1)$ & & $(1,1,2,1)$ \\ 
$(1,1,1,2,1,2)$ & & $(1,1,1,2)$  \\ \cline{3-3}
$(2,1,2,2,1)$ & & $(2,1,2,1)$ \\
$(1,2,1,2,2)$ & & $(1,2,1,2)$
\end{tabular}
\label{wc1ex}
\end{table}

The remaining columns of the triangle of $w(n,k)$ values all follow the same recurrence, involving just the two terms $w(n-1,k-1)$ and $w(n-2,k)$, a significant improvement to a linear recurrence of degree $2k+4$.

\begin{theorem} \label{wc2}
For $k \ge 2$ and $n \ge 6$, \[w(n,k) = w(n-1,k-1) + w(n-2,k).\]
\end{theorem}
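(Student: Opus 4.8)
The plan is to prove Theorem~\ref{wc2} by a bijection $W(n,k) \cong W(n-1,k-1) \cup W(n-2,k)$, in the style of the earlier proofs of this section, from which the claim follows by counting. The identity can also be read off from Theorem~\ref{wcolgf}: writing $f_j(q) = q^{j+4}/\bigl((1-q)^2(1-q^2)^{j+1}\bigr)$ for the generating function there, one has $f_k = q\,f_{k-1} + q^2 f_k$ simply because $(1-q^2)+q^2 = 1$. That computation requires $f_{k-1}$ to be the generating function of $w(n,k-1)$, which by Theorem~\ref{wcolgf} holds precisely when $k-1 \ge 1$; this is exactly why $k=1$ is genuinely different and was treated separately in Theorem~\ref{wc1}. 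I would still prefer the bijection, since it makes the role of the hypothesis $k \ge 2$ transparent.

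The first step is to record the structural fact already implicit in the proof of Theorem~\ref{wcolgf}: a part $1$ of a composition in $C_{12}(n)$ sits under a water cell exactly when it lies strictly between the first and the last part equal to $2$, and it then accounts for exactly one water cell. Hence, for $k \ge 1$, every composition in $W(n,k)$ has at least two parts equal to $2$, so it has a well-defined final $2$, and since at least one part $1$ precedes that final $2$, it also has a well-defined part immediately before it.

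The bijection then splits $W(n,k)$ according to that immediately preceding part. If it is a $1$, delete this $1$: it is under water (there must be a $2$ to its left, else no part $1$ could be under water at all), and removing it deletes exactly one water cell while leaving the first and the last $2$ in place, so the result lies in $W(n-1,k-1)$. If the preceding part is a $2$, delete it instead: because $k \ge 2$ there is at least one under-water $1$, which forces this $2$ to lie strictly to the right of the first $2$, so deleting it changes neither the first $2$, nor the last $2$, nor the number of under-water $1$s, and the result lies in $W(n-2,k)$. The inverse maps insert a $1$, respectively a $2$, immediately before the final $2$ of a composition arising from $W(n-1,k-1)$, respectively from $W(n-2,k)$. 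I would check that the inserted $1$ is again under water --- and this is where $k \ge 2$ enters essentially, since a composition in $W(n-1,k-1)$ has $k-1 \ge 1$ water cells and therefore still possesses a part $2$ to the left of its final $2$ --- so that the image has exactly $k$ water cells, and that the two insertion maps land precisely in the ``preceding part $1$'' and ``preceding part $2$'' halves of $W(n,k)$ and invert the two deletions.

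I expect the only real work to be the boundary bookkeeping: confirming that the deletions never create or destroy a ``first $2$'' or a ``final $2$,'' which is exactly the point where $k \ge 2$ is needed. For $k = 1$ the argument breaks because the first deletion map fails to be onto: deleting the unique under-water $1$ always yields a composition of $W(n-1,0)$ with at least two parts $2$, so compositions of $W(n-1,0)$ having zero or one part $2$ (for instance $(1^{n-1})$) are never hit --- consistent with $k=1$ requiring the separate Theorem~\ref{wc1}. I would also check the small cases $n = k+4$ and $n = k+5$ directly, where $W(n-2,k)$ is empty and the statement collapses to $W(n,k) \cong W(n-1,k-1)$, and, following the pattern of the earlier theorems, include a short worked example (say $n = 8$, $k = 2$) in a table.
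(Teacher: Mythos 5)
Your bijection is the paper's own: conditioning on whether the part immediately preceding the final $2$ is a $1$ or a $2$ is exactly the paper's split into ``a single part 2 after the last water cell'' versus ``a run of at least two,'' and the deletion/insertion maps coincide (your phrasing of the reverse map --- insert immediately before the final $2$ --- is actually more precise than the paper's ``add another part 1 after the last water cell,'' and is the reading confirmed by the paper's worked table for $n=9$, $k=3$). Your opening observation that $f_k = q\,f_{k-1} + q^2 f_k$ gives a valid independent one-line proof from Theorem~\ref{wcolgf}, but since you subordinate it to the bijection, the overall approach is essentially the same as the paper's.
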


\begin{proof}
We establish a bijection
\[W(n,k) \cong W(n-1,k-1) \cup W(n-2,k)\]
from which the claim follows.

Given a composition in $W(n,k)$, it has either a single part 2 after its last water cell or a run of at least two parts 2 after its last water cell.  If there is a single part 2 after the last water cell, then remove the part 1 beneath the last water cell to give a composition in $W(n-1,k-1)$.  If the composition in $W(n,k)$ has a run of two or more parts 2 after the last water cell, remove the first of those parts 2, leaving a composition in $W(n-2,k)$.

For the reverse map, given a composition in $W(n-1,k-1)$, add another part 1 after the last water cell to make a composition in $W(n,k)$.  Given a composition in $W(n-2,k)$, add a part 2 after the last water cell.
\end{proof}

See Table \ref{wc2ex} for an example of the bijection.

\begin{table}[ht]
\caption{The bijections of the proof of Theorem \ref{wc2} for $n= 9$ and $k = 3$.} 
\medskip
\centering
\begin{tabular}{rcl}
$W(9,3)$ & $\longleftrightarrow$ & $W(8,2) \cup W(7,3)$ \\ \hline
$(2,2,1,1,1,2)$ & & $(2,2,1,1,2)$ \\
$(2,1,2,1,1,2)$ & & $(2,1,2,1,2)$ \\ 
$(2,1,1,2,1,2)$ & & $(2,1,1,2,2)$ \\
$(2,1,1,1,2,1,1)$ & & $(2,1,1,2,1,1)$ \\
$(1,2,1,1,1,2,1)$ & & $(1,2,1,1,2,1)$ \\ 
$(1,1,2,1,1,1,2)$ & & $(1,1,2,1,1,2)$  \\ \cline{3-3}
$(2,1,1,1,2,2)$ & & $(2,1,1,1,2)$
\end{tabular}
\label{wc2ex}
\end{table}

\subsection{Water cell diagonals}
The sequence of diagonal sums
\[d(n) = w(n,0) + w(n-1,1) + w(n-2,2) + \cdots\]
begins $1, 1, 2, 3, 5, 7, 11, 15, 23, 31$ which matches A052955 in \cite{o} with an additional term 1 at the beginning.  Write $D(n)$ for the corresponding set of compositions.

We connect these to compositions where only the first and last parts can be odd, that is, any internal parts are even: Let $C_{ie}(n)$ be the compositions in $C(n)$ with length one or two and the $(c_1, \ldots, c_t)$ with $t \ge 3$ such that parts $c_2, \ldots, c_{t-1}$ are all even.
For example,
\begin{gather*}
C_{ie}(4) = \{(4),(3,1),(2,2),(1,3),(1,2,1)\}, \\ C_{ie}(5) = \{(5),(4,1),(3,2),(2,3),(2,2,1),(1,4),(1,2,2)\}
\end{gather*}
so that $c_{ie}(4) = 5$ and $c_{ie}(5) = 7$.  Note that, for $n$ even, compositions in $C_{ie}(n)$ either have all parts even or both $c_1$ and $c_t$ odd (for compositions with at least two parts).  Similarly, for $n$ odd, exactly one of $c_1$ and $c_t$ is odd when $t \ge 2$.

\begin{theorem} \label{dp}
The water cell array diagonal starting from $w(n,0)$ equals the number of compositions of $n$ with internal parts even, i.e., $d(n) = c_{ie}(n)$. 

The generating function for $d(n)$ is
\begin{equation}
 \sum_{n \ge 0} d(n) q^n = \frac{1-q^2+q^3}{1 - q - 2 q^2 + 2 q^3}. \label{dgf}
 \end{equation}

Further, for $n \ge 1$,
\begin{equation}
d(n) = \begin{cases} 2^m - 1 & \text{if $n = 2m-1$}, \\ 3\cdot 2^{m-1} - 1 & \text{if $n = 2m$}. \end{cases} \label{df}
\end{equation}
\end{theorem}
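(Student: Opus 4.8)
The plan is to handle the three assertions in turn: the combinatorial identity $d(n)=c_{ie}(n)$ first, then the generating function \eqref{dgf}, and finally \eqref{df} as a consequence of it.

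For $d(n)=c_{ie}(n)$ I would give a bijection from $\bigcup_{k\ge 0}W(n-k,k)$ to $C_{ie}(n)$ read off directly from the poured‑water picture. Given a composition $\delta$ counted by $d(n)$, say $\delta\in W(n-k,k)$, flood it: since the parts are $1$ and $2$, the water level is $2$ on precisely the block of columns from the first part $2$ to the last part $2$ and is $1$ outside, so the flooded bargraph is a weakly unimodal composition $(1^{a_0},2^{c},1^{a_s})$ of $n$, with $c=0$ exactly when $\delta=(1^{n})$. Conversely, $\delta$ is recovered from this flooded shape together with the set $S$ of its originally height‑$1$ columns that lie inside the middle block of $2$'s; such a column cannot be the first or last column of that block, so $S\subseteq\{2,\dots,c-1\}$ and $|S|=k$. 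Thus $\delta$ corresponds to a pair $((1^{a_0},2^{c},1^{a_s}),S)$ with $S\subseteq\{2,\dots,c-1\}$.

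It then remains to encode such a pair as an element of $C_{ie}(n)$: send it to $(n)$ if $c=0$; to $(a_0+1,a_s+1)$ if $c=1$; and, if $c\ge 2$ with $S=\{s_1<\cdots<s_k\}$, to
\[ (a_0+1,\ 2(s_1-1),\ 2(s_2-s_1),\ \dots,\ 2(s_k-s_{k-1}),\ 2(c-s_k),\ a_s+1). \]
One checks that the parts sum to $n$, that every interior part is even, and that the three cases land on the members of $C_{ie}(n)$ of length $1$, length $2$, and length $\ge 3$ respectively, each exactly once; the inverse simply reads the interior parts of a given element of $C_{ie}(n)$ back as a composition of $c-1$ and strips $a_0+1$ and $a_s+1$ off the ends.

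With $d(n)=c_{ie}(n)$ in hand, \eqref{dgf} follows from a length decomposition of $C_{ie}$: the empty, length‑$1$, and length‑$2$ compositions contribute $1+\frac{q}{1-q}+\frac{q^2}{(1-q)^2}$, while a composition of length $\ge 3$ splits as a first part, a nonempty string of even parts, and a last part, contributing $\frac{q}{1-q}\cdot\frac{q^2}{1-2q^2}\cdot\frac{q}{1-q}$; adding and simplifying gives $\frac{1-q^2+q^3}{(1-q)(1-2q^2)}$, which is \eqref{dgf}. Equivalently one gets the same series as the diagonal sum $\sum_n d(n)q^n=W_0(q)+\sum_{k\ge 1}q^{k}W_k(q)$, where $W_0$ and $W_k$ are the column generating functions of Theorems \ref{wum} and \ref{wcolgf} and the sum over $k$ is geometric; this is the route I expect to carry out via Riordan arrays in Section 3. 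Finally \eqref{df} is read off from \eqref{dgf} by partial fractions: since the fraction is improper, first write $\frac{1-q^2+q^3}{(1-q)(1-2q^2)}=\tfrac12-\frac{1}{1-q}+\frac{2q+\frac32}{1-2q^2}$, so that for $n\ge 1$ the constant contributes nothing, $-\frac{1}{1-q}$ contributes $-1$, and $(2q+\tfrac32)\sum_{i\ge 0}2^{i}q^{2i}$ contributes $2^{(n+1)/2}$ for $n$ odd and $3\cdot 2^{\,n/2-1}$ for $n$ even, which is \eqref{df}.

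The two generating‑function computations are mechanical; the step needing genuine care is the bijection, and in particular confirming that the degenerate cases $c=0$ and $c=1$ account for exactly the length‑$1$ and length‑$2$ compositions of $C_{ie}(n)$ while the $c\ge 2$ case is a bijection onto all the longer ones. That bookkeeping is where I expect the main effort to lie.
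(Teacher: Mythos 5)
Your proposal is correct, but it diverges from the paper's proof at each of the three stages, so it is worth recording the differences. For the identity $d(n)=c_{ie}(n)$, the paper inserts an extra part 1 after each water cell (so that every internal run of 1s acquires even length) and then applies MacMahon conjugation; you instead encode the flooded shape $(1^{a_0},2^c,1^{a_s})$ together with the set $S$ of water-cell positions directly into a composition with even interior parts. The two maps are genuinely different --- for instance $(2,1,2,1,1)\in W(7,1)$ is sent to $(1,4,3)$ by the paper's conjugation map but to $(1,2,2,3)$ by yours --- yet both are bijections onto $C_{ie}(n)$, and your case analysis ($c=0$, $c=1$, $c\ge 2$ matching lengths $1$, $2$, $\ge 3$) is sound. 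One small point to state explicitly: in the $c\ge 2$, $k=0$ case the interior of your image must be read as the single part $2(c-1)$ (the doubled gap of $\{1,c\}$), so that $c$ remains recoverable and the image has length $3$ rather than collapsing onto the $c=1$ case. For \eqref{dgf} the paper defers the proof to Section~3 and assembles the series from the Riordan-array diagonal sums \eqref{wrd} together with \eqref{w0gf}; your length decomposition of $C_{ie}(n)$ --- arbitrary end parts contributing $q/(1-q)$ each and a nonempty run of even interior parts contributing $q^2/(1-2q^2)$ --- is more elementary and self-contained, and in effect supplies the combinatorial derivation of the generating function that the paper explicitly leaves to the reader. For \eqref{df} the paper gives explicit bijections $C_{ie}(2m-1)\cong 2C(m)\setminus(m)$ and $C_{ie}(2m)\cong 3C(m)\setminus(m)$ and invokes $c(m)=2^{m-1}$, while you read the formula off \eqref{dgf} by partial fractions; your decomposition $\tfrac12-\tfrac{1}{1-q}+\bigl(2q+\tfrac32\bigr)/(1-2q^2)$ checks out and the coefficient extraction is correct, though the paper's route is the more combinatorial one. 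In short: every step of your argument is valid, but each replaces a bijective or Riordan-array argument of the paper with a differently structured one.
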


We defer the proof of \eqref{dgf} to the next section.

\begin{proof}
We establish a bijection
\[D(n) = \bigcup_{k \ge 0} W(n-k,k) \cong C_{ie}(n).\]

Given a composition in $W(n-k,k)$ for some $k \ge 0$, add a part 1 after each water cell to get a composition $c \in W(n,2k)$ where each internal run of 1s has even length.  Then the composition conjugate to $c$ has any internal parts even.

For the reverse map, given a composition in $C_{ie}(n)$, its conjugate has parts at most 2 (since a part 3, for instance, would require an internal part 1) with each internal run of 1s having even length.  Halve each internal run of 1s to produce a composition in $W(n-k,k)$ for some $k \ge 0$.

To establish \eqref{df}, we use $c_{ie}(n)$ rather than $d(n)$ and first consider the case $n = 2m-1$ for some $m \ge 1$.  We establish the bijection
\[C_{ie}(2m-1) \cong C(m) \cup \{C(m) \setminus (m)\} \]
from which the identity follows since $c(m) = 2^{m-1}$.

Given a composition in the first set $C(m)$, double each part and then decrease the first part by one.  In the second set $C(m)$, double each part and then decrease the last part by one (but not $(2m)$ where the first and last part are the only part).  The resulting compositions have only the first part odd, respectively, the last part odd, so they are in $C_{ie}(2m-1)$.

For the reverse map, a composition in $C_{ie}(2m-1)$ has exactly one odd part, the first or the last part.  For those with first part odd, increase that part by one and halve all parts to make one set $C(m)$.  For those with last part odd (excluding the single part composition $(2m-1)$), increase that part by one and halve all parts to make the other set $C(m)$ except for $(m)$.

For the case $n = 2m$, we establish the bijection 
\[C_{ie}(2m) \cong 2C(m) \cup  \{C(m) \setminus (m)\} \]
from which the identity follows.

Given a composition in the first set $C(m)$, double each part.  In the second set $C(m)$, double each part, then decrease the first part by one and add a part 1 at the end; this produces a composition with first part odd and last part 1.  In the third set $C(m)$, double each part, then decrease the first part by one and increase the last part by one (but exclude $(2m)$ which would be sent to itself); this produces a composition with first part odd and last part odd at least 3.

For the reverse map, a composition in $C_{ie}(2m)$ has either no odd parts or two odd parts, the first and the last.  For those with all even parts, halve all parts to make one set $C(m)$.  For compositions with first part odd and last part 1, increase the first part by one, remove the last part, and halve the resulting parts to make a second set $C(m)$.  For compositions with first part odd and last part odd at least 3, increase the first part by one, decrease the last part by one, and halve the resulting parts to make a third set $C(m)$ except for $(m)$.
\end{proof}

See Tables \ref{dbij} and \ref{dfbij} for examples of the bijections.

\begin{table}[ht]
\caption{The first bijection of the proof of Theorem \ref{dp} for $n= 8$ (with only some of the 17 compositions in $W(8,0)$ but all of $W(7,1)$ and $W(6,2)$).} 
\medskip
\centering
\begin{tabular}{rcl}
$W(8,0) \cup W(7,1) \cup W(6,2)$ & $\longleftrightarrow$ & $C_{ie}(8)$ \\ \hline
$(2,2,2,2)$ & & $(1,2,2,2,1)$ \\
$(1,2,2,1,1,1)$ & & $(2,2,4)$ \\ 
$(1,1,2,1,1,1,1)$ & & $(3,5)$ \\
$(1,1,1,1,2,2)$ & & $(5,2,1)$ \\
$(1^8)$ & & $(8)$ \\
$\cdots$ & & [compositions with all internal parts 2] \\ \cline{1-1}
$(2,2,1,2)$ & & $(1,2,4,1)$ \\
$(2,1,2,2)$ & & $(1,4,2,1)$ \\
$(2,1,2,1,1)$ & & $(1,4,3)$ \\
$(1,2,1,2,1)$ & & $(2,4,2)$ \\
$(1,1,2,1,2)$ & & $(3,4,1)$ \\ \cline{1-1}
$(2,1,1,2)$ & & $(1,6,1)$
\end{tabular}
\label{dbij}
\end{table}

\begin{table}[ht]
\caption{The bijections of the proof of \eqref{df} for $n= 5$ and $n = 6$ (both with $m = 3$).} 
\medskip
\centering
\begin{tabular}{rcl}
$2C(3) \setminus (3)$ & $\longleftrightarrow$ & $C_{ie}(5)$ \\ \hline
$(3)$ & & $(5)$ \\
$(2,1)$ & & $(3,2)$ \\ 
$(1,2)$ & & $(1,4)$ \\
$(1,1,1)$ & & $(1,2,2)$ \\ \cline{1-1}
$(2,1)$ & & $(4,1)$ \\
$(1,2)$ & & $(2,3)$ \\
$(1,1,1)$ & & $(2,2,1)$
\end{tabular}
\qquad
\begin{tabular}{rcl}
$3C(3) \setminus (3)$ & $\longleftrightarrow$ & $C_{ie}(6)$ \\ \hline
$(3)$ & & $(6)$ \\
$(2,1)$ & & $(4,2)$ \\ 
$(1,2)$ & & $(2,4)$ \\
$(1,1,1)$ & & $(2,2,2)$ \\ \cline{1-1}
$(3)$ & & $(5,1)$ \\
$(2,1)$ & & $(3,2,1)$ \\ 
$(1,2)$ & & $(1,4,1)$ \\
$(1,1,1)$ & & $(1,2,2,1)$ \\ \cline{1-1}
$(2,1)$ & & $(3,3)$ \\ 
$(1,2)$ & & $(1,5)$ \\
$(1,1,1)$ & & $(1,2,3)$
\end{tabular}
\label{dfbij}
\end{table}

We establish \eqref{dgf} using Riordan arrays in the next section.  One could pursue a combinatorial proof of that generating function, or at least the recurrence
\[d(n) = d(n-1) + 2d(n-2) - 2d(n-3)\]
for $n \ge 3$ that follows from it, similar to the proof of Theorem \ref{wum} (d).  Also, the identity \[d(n) = 2d(n-2)+1\] for $n \ge 2$ follows from \eqref{df}, but there may be simpler combinatorial proof of this identity not requiring cases based on the parity of $n$.  We leave these to the interested reader.

%%%%%%%%%%%%%%%%%%%%%%%%%%%%%%%%%%%%%%%%%%%%%%%%%%%%%%%%%%%%%%%%%%%%%%%%%%%%%%%%%%%%%%%%%%%%%%%%%%%%%%%%%%%%%%%%%%%%%%%%%%%%%%%%%%%%%%%%%%%%%%%%%%
%%%%%%%%%%%%%%%%%%%%%%%%%%%%%%%%%%%%%%%%%%%%%%%%%%%%%%%%%%%%%%%%%%%%%%%%%%%%%%%%%%%%%%%%%%%%%%%%%%%%%%%%%%%%%%%%%%%%%%%%%%%%%%%%%%%%%%%%%%%%%%%%%%
%%%%%%%%%%%%%%%%%%%%%%%%%%%%%%%%%%%%%%%%%%%%%%%%%%%%%%%%%%%%%%%%%%%%%%%%%%%%%%%%%%%%%%%%%%%%%%%%%%%%%%%%%%%%%%%%%%%%%%%%%%%%%%%%%%%%%%%%%%%%%%%%%%
\section{Connections to Riordan arrays}

In this final section, we consider many of our results through Riordan arrays and complete the proof of Theorem \ref{dp}.
For our purposes, it is enough to say that certain triangular arrays of integers can be described by an ordered pair of rational functions $(d(t), h(t))$ where $d(t)$ is the generating function of the first column and $h(t)$ encapsulates the relation between columns.  From this formulation, row sums, diagonal sums, and antidiagonal sums follow directly from $d(t)$ and $h(t)$ as detailed by Sprugnoli \cite{s94}.

The irregular water cell triangle $w(n,k)$ of Table \ref{wtab} is not a Riordan array, but the portion starting with the $k=1$ column from $n=5$ is.  Specifically, the Riordan array for compositions with a positive number of water cells is given by
\begin{equation}
\left(\frac{1}{(1-t)^2(1-t^2)^2}, \frac{1}{1-t^2} \right)\!. \label{w1+}
 \end{equation}
The verification of $d(t)$ is the $k=1$ case Theorem \ref{wcolgf} and confirming $h(t)$ is equivalent to Theorem \ref{wc2}.  With this, the row sums for this subtriangle have generating function
\begin{align}
\frac{d(t)}{1-t h(t)} & = \frac{1}{(1 - t)^2 (1 - t^2) (1 - t - t^2)} \label{wrs} \\
& = 1 + 3 t + 8 t^2 + 17 t^3 + 34 t^4 + 63 t^5 + 113 t^6 + 196 t^7 + 334 t^8 + \cdots \notag %(period removed)
\end{align}
%new
which matches A344004 in \cite{o}.

Since we know the row sums for the complete $w(n,k)$ array are Fibonacci numbers, we now have another way to establish \eqref{w0gf} from Theorem \ref{wum}: The generating function for $w(n,0)$ is
\[ \frac{1}{1 - t - t^2} - \frac{t^5}{(1 - t)^2 (1 - t^2) (1 - x - t^2)} = \frac{1-t-t^3}{1 - 2 t + 2 t^3 - t^4} \]
where the $t^5$ term modifying \eqref{wrs} places the Riordan array correctly in the $w(n,k)$ triangle.

The diagonal sums of the Riordan array subtriangle of $w(n,k)$ have generating function
\begin{align}
\frac{d(t)}{1-t^2 h(t)} & = \frac{1}{(1 - t)^3 (1 + t) (1 - 2 t^2)} \label{wrd} \\
& = 1 + 2 t + 6 t^2 + 10 t^3 + 21 t^4 + 32 t^5 + 58 t^6 + 84 t^7 +  141 t^8 + \cdots. \notag 
\end{align}

With this, we complete the proof of Theorem \ref{dp}.

\begin{proof}[Proof of \eqref{dgf}]
The generating function for $d(n)$ follows from combining \eqref{w0gf} for $w(n,0)$ and \eqref{wrd} with the appropriate factor for the rest of the diagonal sum.  The first $d(n)$ with two positive summands is $d(6) = w(6,0) + w(5,1)$, 
so we have
\[ \sum_{n\ge0} d(n) t^n = \frac{1-t-t^3}{1 - 2 t + 2 t^3 - t^4} + \frac{t^6}{(1 - t)^3 (1 + t) (1 - 2 t^2)} = \frac{1 - t^2 + t^3}{1 - t -2t^2 + 2t^3}. \qedhere\]
\end{proof}

\section*{Acknowledgments}
We appreciate Paul Barry's assistance with Riordan arrays, James Shapiro's helpful online tool \url{https://riordancalculator.com/}, and the anonymous referees' recommendations.  Let us also mention that Mark Shattuck, working from a preprint of this article, has further developed these ideas \cite{s25}.

\medskip

\noindent MSC2020: 05A17, 11B37


\begin{thebibliography}{9}

\bibitem{bq03}
Benjamin, A., Quinn, J. (2003). \textit{Proofs That Really Count:\ The Art of Combinatorial Proof}. Dolciani Mathematical Expositions 27, Mathematical Association of America.

\bibitem{bbk18}
Blecher, A., Brennan, C., Knopfmacher, A. (2018).  The water capacity of integer compositions. \textit{Online J. Anal. Comb.} 13: \#06.%, 14 pp.

\bibitem{dfs24}
Davenport, D. E., Frankson, S. K., Shapiro, L. W., Woodson, L. C. (2024). An invitation to the Riordan group. \textit{Enumer. Combin. Appl.} 4: \#S2S1.%, 26 pp. 

\bibitem{m15}
MacMahon, P. (1915).  \textit{Combinatory Analysis}, vol. 1. Cambridge University Press.

\bibitem{ms18}
Mansour, T., Shattuck, M. (2018). Counting water cells in bargraphs of compositions and set partitions. \textit{Appl. Anal. Discrete Math.} 12: 413--438.

\bibitem{o}
OEIS Foundation Inc. (2025). \textit{The On-Line Encyclopedia of Integer Sequences}. \url{https://oeis.org}.

\bibitem{sgww91}
Shapiro, L. W., Getu, S., Woan, W., Woodson L. C. (1991). The Riordan group. \textit{Discrete Appl. Math.} 34: 229--239.

\bibitem{s25}
Shattuck, M. (2025). Further results for the capacity statistic distribution on compositions of 1's and 2's.  arXiv:2501.09931.

\bibitem{s85}
Singh, P. (1985). The so-called Fibonacci numbers in ancient and medieval India. \textit{Historia Math.} 12: 229--244.

\bibitem{s94}
Sprugnoli, R. (1994). Riordan arrays and combinatorial sums. \textit{Discrete Math.} 132: 267--290.

\end{thebibliography}
\end{document}